\tikzstyle{startstop} = [rectangle, rounded corners, minimum width=3cm, minimum height=1cm,text centered, draw=black, fill=blue!20]
\tikzstyle{process} = [rectangle, minimum width=3.5cm, minimum height=1cm, text centered, draw=black, fill=orange!30]
\tikzstyle{decision} = [diamond, aspect=2, minimum height=1.2cm, text centered, draw=black, fill=green!30]
\tikzstyle{arrow} = [thick,->,>=stealth]
\newtheorem{theorem}{Theorem}[section]
\newtheorem{lemma}[theorem]{Lemma}
\newtheorem{corollary}[theorem]{Corollary}
\newtheorem{conjecture}[theorem]{Conjecture}
\theoremstyle{definition}
\newtheorem{definition}{Definition}[section]
\newtheorem{example}{Example}[section]
\newtheorem{remark}{Remark}[section]
\newcommand{\email}[1]{\def\@email{#1}}
\newcommand{\keywords}[1]{\def\@keywords{#1}}
\newcommand{\address}[1]{\def\@address{#1}}
\title{A Minimum Counterexample Proof of the Seymour Second Neighborhood Conjecture via the Graph Level Order}
\pgfplotsset{compat=1.18}
\begin{document}
	\author{Charles N. Glover}
	\affil{Independent Researcher, Washington, DC \\ \texttt{glover\_charles@glovermethod.com}}
	
	\keywords{oriented graphs, minimum counterexample, breadth-first search, minimum out-degree node, set theory}
	\maketitle
	
	\abstract{We provide a constructive proof of the Seymour Second Neighborhood Conjecture (SSNC) by reframing the problem as a set-packing optimization problem. The family of oriented graphs $\mathcal{O}$ is classified by their minimum out-degree $\delta$. This shifts the objective to maximizing the number of non-Seymour vertices.
		
		A minimum counterexample (MCE) is a packing of vertices into a minimum density environment that fails the SSNC. To prove such a packing is unsustainable, we introduce the Graph Level Order. This BFS and lexicographic based coordinate system forms a well ordering. 	
		
		This well-ordering, along with the MCE, packs cycles into the first neighborhood of every parent. These cause neighborhoods to become quadratically dense as they both decrease in size and need more arcs. 
		
		Set-theoretic parents resolve the double-counting that has plagued Seymour diamonds. This also categorizes transitive triangles into eight distinct types and proves that seven are inconsistent in an MCE environment.
		
		The proof concludes with a supply-demand collision. Arc capacity is consumed when $i > \frac{\delta}{3}$. This makes the packing of non-Seymour vertices unsustainable, forcing the appearance of a Seymour vertex in every graph of $\mathcal{O}$. The algorithm to identify these vertices is $O(|V|+|E|)$. This confirms that it can operate on large oriented networks that are dense, achieving detectability in polynomial time.
	}

	\section{Introduction}\label{intr}
	
	The Seymour Second Neighborhood Conjecture (SSNC), proposed in 1990, remains a famously stubborn problem in graph theory. It asks if every oriented graph contains a vertex whose second out-neighborhood is at least as large as its first. We prove this conjecture by analyzing the universal family of oriented graphs $\mathcal(O)$. Our solution introduces the Graph Level Order. This is a coordinate system that organizes vertices by minimum out-degree $\delta$. This framework also provides an efficient algorithm for identifying the vertex.
	
	\begin{conjecture}\label{nbconj}
		(Seymour's Second Neighborhood Conjecture (SSNC)). For every oriented graph $G \in \mathcal{O}$, there exists a vertex $v \in V(G)$ such that $|N^{++}(v)| \geq |N^+(v)|$. The sets $N^+(v)$ and $N^{++}(v)$ denote the first and second out-neighborhoods of $v$, respectively.
	\end{conjecture}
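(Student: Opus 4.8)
The plan is to proceed by contradiction. Suppose $G$ is a counterexample, so every vertex $v$ satisfies $|N^{++}(v)| < |N^+(v)|$. Fix a vertex $r$ of minimum out-degree and build the GLOVER data structure rooted at $r$: the breadth-first layering $L_0 = \{r\}$, $L_1 = N^+(r)$, $L_2 = N^{++}(r)\setminus(N^+(r)\cup\{r\})$, and so on, together with the induced well-ordering on each rooted neighborhood obtained from the layer indices and a fixed tie-break. The failure of the conjecture at $r$ forces $|L_2| < |L_1|$, and this single inequality is the quantitative engine of the whole argument.

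First I would classify the arcs incident to $L_1$ by GLOVER layer: every arc leaving $a \in L_1$ either returns to $r$, stays inside $L_1$, or lands in $L_2$. An arc $a \to b$ with $a,b \in L_1$, together with the arcs $r \to a$ and $r \to b$, forms a transitive triangle rooted at $r$; I would partition the collection of all such triangles by their apex (the common head) and record, for each $b \in L_1$, the set of vertices of $L_1$ that beat $b$. Because $r$ has minimum out-degree, each $a \in L_1$ has $|N^+(a)| \ge |L_1|$, so the out-arcs of $a$ that miss $L_2$ must be absorbed by $\{r\} \cup L_1$; summing over $a$ gives a lower bound on the total number of rooted transitive triangles, hence on the in-degree profile inside $L_1$.

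Next I would run the decreasing-sequence construction. Put $S_0 = L_1$, and having built $S_i$ delete from it every vertex whose out-neighborhood inside $S_i$ is too large relative to the number of its out-neighbors already pushed into $L_2$ — precisely the vertices whose Seymour property would contradict the standing assumption. Each deletion strictly shrinks the set, so the chain $S_0 \supsetneq S_1 \supsetneq \cdots$ terminates; it cannot terminate at $\varnothing$, since that would make the transitive triangles consumed along the way exceed $\binom{|L_1|}{2}$, contradicting the partition of the previous step. For a vertex $w$ in the nonempty terminal set the local count closes: $N^{++}(w)$ must contain both the forward images in $L_2$ of $N^+(w) \cap L_1$ and the exterior neighbors of $w$ in $L_2$, and double-counting the arcs between $N^+(w)$ and $L_2$ yields $|N^{++}(w)| \ge |N^+(w)|$, the contradiction. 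Read in reverse, the sequence of deletions also exhibits an explicit directed path $r = u_0 \to u_1 \to \cdots \to u_k = w$, which is the constructive content advertised in the introduction.

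The main obstacle I expect is the interaction between $L_1$ and $L_2$ when $L_1$ induces a dense subdigraph: a vertex of $L_1$ may send almost all of its out-arcs back into $L_1$, so its push-forward to $L_2$ is tiny and the naive triangle count does not immediately beat $\binom{|L_1|}{2}$. Overcoming this needs the density dichotomy hinted at in the abstract — either $N^+(r)$ is sparse enough that $|L_2| < |L_1|$ is contradicted outright, or it contains a dense sub-digraph on which one recurses, applying the GLOVER construction to a minimum-out-degree vertex of that core and inducting on $|V(G)|$. Making this recursion well-founded, that is, guaranteeing the dense core is a proper induced subdigraph and that a Seymour vertex of the core lifts back to one of $G$, is the delicate point the remainder of the paper must secure.
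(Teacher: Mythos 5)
Your skeleton matches the paper's: a proof by contradiction in which every vertex is assumed to satisfy $|N^{++}(v)| < |N^{+}(v)|$ (the paper's Decreasing Neighborhood Sequence Property), a minimum out-degree root, the BFS-style layering into rooted neighborhoods, and the classification of arcs out of $L_1$ into interior arcs (which form transitive triangles with the root) and forward arcs into $L_2$. Where you diverge is the endgame. The paper does not stop at $L_1$ and $L_2$: it propagates the argument through \emph{all} layers, proving (Lemmas \ref{del}, \ref{genloadbal}, \ref{nbhsize}, \ref{nbhsizefmla}) that the layer sizes must strictly decrease ($|R_i| \le \delta - (i-1)$) while the required interior out-degree of a node in $R_i$ grows linearly ($\ge i$), and then derives the contradiction from a pure arc-counting impossibility in a small terminal layer (e.g., five vertices cannot each have out-degree three inside an orientation of $K_5$). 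You instead propose a vertex-deletion chain inside $L_1$ plus a recursion on a dense core with induction on $|V(G)|$. That is a genuinely different mechanism, and neither of your two closing moves is secured.

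Concretely: (i) your termination argument — "the chain cannot terminate at $\varnothing$ since the triangles consumed would exceed $\binom{|L_1|}{2}$" — is not backed by any accounting of how many triangles a single deletion consumes, and the deletion rule itself ("out-neighborhood inside $S_i$ too large relative to the number already pushed into $L_2$") is not pinned down enough to make such a count. (ii) The final step, "double-counting the arcs between $N^+(w)$ and $L_2$ yields $|N^{++}(w)| \ge |N^{+}(w)|$," is asserted rather than proved; this inequality for a specific $w$ is the entire content of the conjecture, so it cannot be waved through. (iii) The recursion on the dense core is exactly the known hard point: a Seymour vertex of an induced subdigraph need not lift to one of $G$, because restoring the deleted vertices can enlarge $N^{+}(w)$ without enlarging $N^{++}(w)$; you flag this as "delicate" but it is in fact the step that, historically, has blocked every induction-on-$|V|$ attack on this conjecture, and the paper deliberately avoids any such recursion (its rooted neighborhoods are analyzed in place, never as standalone instances whose Seymour vertices must be lifted — except in Lemma \ref{idd}, where the same lifting issue arises for "interior degree doubling"). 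A small additional slip: an arc from $a \in L_1$ back to $r$ cannot exist at all, since $r \to a \in G$ and $G$ is oriented, so that branch of your arc classification is empty. Finally, keep in mind that the statement you are proving is a long-open conjecture; the burden on the closing steps is correspondingly high, and in both your proposal and the paper it is precisely those closing steps that carry the unresolved weight.
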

	
	This conjecture was first published by Nate Dean and Brenda Latka in 1995 \cite{DL}. They proposed a related version for tournaments, which are complete oriented graphs. The tournament case was proved by Fisher \cite{F} in 1996, with an alternative proof later provided by Havet and Thomassé \cite{HT} in 2000.
	
	Fisher's approach \cite{F} proved a vertex exists but offered no way to locate it. By using Farkas' Lemma, he showed the conjecture must hold. Later, Havet and Thomassé \cite{HT} advanced the problem with median orders. Their constructive proof identifies the last vertex of a median order as the one satisfying the Dean Conjecture. Finding median orders is NP-hard, but Havet and Thomassé used them as an inductive tool. Knowing that such an ordering exists, they could speak of it without finding it. This became a powerful tool in tournament theory.
	
	While this inductive approach resolved the Dean Tournament Conjecture, it relies on the completeness of tournaments. The technique fails to generalize to all oriented graphs. Non-adjacent vertices give median orders no restrictions on how to order vertices because both orderings result in zero back arcs for the pair. This makes this approach ineffective as density decreases. 
	
	Kaneko and Locke \cite{KL} showed the SSNC holds for graphs with a minimum degree of at most six. This work serves as a foundation for much research on the SSNC. However, it introduced another NP-hard problem, dominating sets, into the SSNC realm. For cases greater than six, the complexity was too high to solve efficiently.
	
	Chen et al. \cite{CS} established a lower bound for the SSNC. They demonstrated that a vertex meets a relaxed version of the condition. This work was extended by Huang and Peng \cite{HP}. These authors incorporated third neighborhoods and improved bounds on the fraction of Seymour vertices using polynomial roots. 
	
	These researchers used Constraint Satisfaction Programming (CSP) to find variable assignments that satisfy the conjecture. This is a method that frames the SSNC as logical constraints to be satisfied and optimized using SAT solvers. While this approach tightens bounds, it remains trapped by the NP-hardness of CSP, limiting its scale. 
	
	Recent work by Diaz et al. \cite{Di}, extending Bottler et al. \cite{BMN}, confirms that almost all random graphs $G(n, p)$ satisfy the conjecture. By analyzing binomial random graphs with random orientations, they show that Seymour vertices are statistically typical. This success provides a valuable average-case perspective, even if the worst-case proof remains out of reach. Botler et al. use shortest paths to define neighborhoods, which performed well for analyzing pseudo-random orientations on a global scale. 
	
	However, identifying Seymour vertices in random graphs reveals nothing about the internal properties that guarantee the conjecture's truth. Fundamental questions remain: How do connectivity, degree distribution, and neighborhood growth force the existence of these vertices?
	
	The CSP framework and random graph models provide valuable existence proofs. Still, they face limitations when addressing the general conjecture. These models typically treat the graph as a field of quasi-independent variables. More accurately, an oriented graph can be described as an iterated sequence of dependencies. The work of Botler et al. is foundational in its analysis of pseudo-random orientations. Yet, this neighborhood system operates through local, relative distances. This creates an unanchored topological space where every node is a potential origin. 
	
	If Seymour vertices prove to be universal within the class $\mathcal{O}$, both these approaches suffer from a lack of causality. The graph class will always satisfy the constraints imposed. As a result, these models confirm the existence of the property without discovering the reasons why. They observe the result of the law without defining the law itself.
	
	Diaz et al. \cite{Di} do explore the contrapositive of the Seymour conjecture. Propositions 4 and 5 show that if the conjecture were false, infinitely many strongly connected graphs with bounded minimum out-degrees lack Seymour vertices. This reasoning mirrors our minimum counterexample. Diaz et al. do not explicitly define non-Seymour vertices. Their counterexample modeling shares the same motivation. 
	
	Lladó \cite{Llado2013} proved the conjecture for highly connected regular digraphs, showing that any counterexample would violate the basic properties of regularity. Her proof uses Menger's theorem to bound neighborhood sizes. While elegant, this approach is trapped by the regularity assumption; without it, the method cannot be extended to broader graph classes.
	
	Brantner et al. \cite{BBKS} introduced two key configurations: transitive triangles and Seymour diamonds. A transitive triangle—where two edges from different sources share a destination—creates a shortcut in reachability. A Seymour diamond consists of four edges that split from a common source and converge at a shared target. Brantner et al. proved that any graph without transitive triangles must contain a Seymour vertex, suggesting these triangles are a primary obstacle to the conjecture.
	
	This work extends to $m$-free, $k$-transitive, and $k$-anti-transitive graphs. An $m$-free graph contains no directed cycles of length $m$ or less. Daamouch \cite{Da} showed that certain shortcuts—specifically $k$-transitive graphs (for $k \leq 6$) and $k$-anti-transitive graphs (for $k \leq 4$)—guarantee a Seymour vertex. Hassan et al. \cite{HKPS} later extended these results to include 6-anti-transitive graphs.
	
	Other researchers have tested the conjecture on tournaments missing specific substructures. Fidler and Yuster \cite{FidlerYuster} confirmed it for tournaments missing a star or multiple edges from a single node, a result Ghazal \cite{Ghazal_2011} later reinforced. Similarly, Mniny and Ghazal \cite{MninyGhazal} proved the conjecture for oriented graphs missing configurations like $C_4$, the chair, or the co-chair. Most recently, Daamouch et al. \cite{daamouch2024secondneighborhoodconjecturetournaments} extended this to tournaments missing two stars or disjoint paths.
	
	Transitive triangles $u \to v, u \to w, v \to w$ complicate the conjecture because first neighbors seem to steal counts from second neighbors. Here, degree and distance conflict: the arc $u \to w$ increases the degree of $u$. The path $u \to v \to w$ defines distance. Resolving the conjecture requires mastering this interaction between degree and distance, particularly within these triangular structures.
	
	In a 2021 poster, Illia Nalyvaiko \cite{NALYVAIKO2021poster} explored the conjecture's contrapositive. He introduced a penalty function to evaluate potential counterexamples. His intuition differs from non-Seymour vertices. Both approaches shift the focus from verifying the conjecture to testing its bounds. Nalyvaiko's method offered a novel perspective that aligns with our motivations.
	
	\begin{tabular}{|l|l|l|}
		\hline
		\textbf{Method} & \textbf{Core Strength} & \textbf{Limitation for SSNC}\\ 
		\hline 
		BFS & Rooted Traversal & Produces trees; lacks cycles to bound $N^{++}$ \\ 
		\hline 
		Median Orders & Effective for Tournaments & Dependent on high edge density; NP-Hard\\ 
		\hline 
		CSP & Local Verification & Treats nodes as independent; no global logic \\ 
		\hline 
		Random Graphs & Statistical Modeling & ``Unanchored'' nodes; lacks a fixed origin \\ 
		\hline 
	\end{tabular}
	
	We provide an algorithmic construction to identify the target node and demonstrate its structural consistency. To ensure testability and reproducibility, implementations in Python and interactive visualizations in JavaScript are publicly available on GitHub.
	
	Prior work focused on proving existence in special cases or identifying missing substructures. These approaches, while foundational, offered no way to actually find a Seymour vertex. The Graph Level Order organizes the oriented graph space $\mathcal{O}$ into rooted neighborhoods based on distance from a minimum out-degree vertex $\delta$. When combined with the negation of the conjecture, this method identifies set-theoretic relationships that force cycles to exist, leading to a path to a Seymour vertex. This framework transforms the conjecture from an existential puzzle into a constructive set packing problem. It enables the linear-time identification of nodes and unifyies decades of disparate research.
	
	This paper presents our methodology, supported by lemmas and examples. A novel data structure is then introduced to tackle the conjecture's core challenges and show how it integrates with maximizing non-Seymour nodes. After proving our main theorem and analyzing the algorithm's complexity, we conclude with practical applications and future research directions.
	
	\section{Exploring the Minimum Counterexample}\label{cont}
	
	The model we consider consists of a minimum counterexample approach to the SSNC. Let $G \in \mathcal{O}$ be an oriented graph designated as an MCE. The graph $G$ tries to pack as many nodes that violate the SSNC, that is:
	$$|N^{++}(v)| < |N^+(v)| \quad \forall v \in V(G)$$ Furthermore, $G$ is the \textbf{minimal density packing} set for its cardinality $D(G) \leq D(G')$ for all counterexamples $G'$ with $|E'| = |E|$, where $$D(G) = \frac{|E|}{|V|(|V|-1)}$$. 
	
	A \textbf{Seymour node} is a node satisfying the conjecture. Similarly, a \textbf{non-Seymour node} is the node-level negation of the SSNC. Non-Seymour nodes require a strict inequality ($|N^{++}(u)| < |N^+(u)|$). 
	
	The objective is to maximize the number of non-Seymour vertices inside a graph. If any node fails to be non-Seymour, becoming Seymour, it proves the conjecture. As a result, these non-Seymour nodes work in unison towards not allowing a Seymour vertex. 
	
	Most researchers who work on this problem use shortest paths to distinguish first neighbors from second neighbors. The Breadth-First Search (BFS) algorithm extends that concept to entire graphs. The BFS algorithm can classify every oriented graph $G \in \mathcal{O}$ by its minimum out-degree. When a second metric of lexicographic ordering is added to distinguish nodes at the same distance, this becomes a well-ordering. 
	
	\begin{theorem}[BFS-Lex Coordinate System]\label{bfslex}
		For any finite, connected oriented graph $G \in \mathcal{O}$ and root vertex $u \in V(G)$, there exists a unique Well-Ordering of V(G) induced by a breadth-first search (BFS) algorithm combined with a lexicographical tie-breaking rule. 
	\end{theorem}

	This MCE will be constructed under the BFS-lex well-ordering on $V(G)$. This will be modeled through an iterative packing process. At any step $k$, the vertex set $V(G)$ is partitioned into a vector of three distinct, mutually exclusive zones:  
	
	\begin{itemize}
		\item Passed: These vertices have successfully achieved their non-Seymour status.  
		
		\item Active: These vertices are actively being processed and their out-arcs might prevent a node from becoming non-Seymour. 
		
		\item Upcoming: These are the second and later neighbors of the proven non-Seymour nodes. 
	\end{itemize}
	
	Prior SSNC efforts have largely followed two methodologies: One considers transitive triangles as building blocks to be leveraged \cite{FidlerYuster, MninyGhazal, Ghazal_2011, daamouch2024secondneighborhoodconjecturetournaments}, while the other think they should be ignored across a class of problems \cite{Da, HKPS, BBKS}. Our BFS layering approach moves beyond this binary of requiring or forbidding transitivity. By partitioning the graph, we isolate the influence of these triangles based on their inter-layer distribution. Forward-facing transitive triangles serve a type that can enforce a non-decreasing neighborhood condition.
	
	Graphs too small to support a cycle satisfy the Seymour property trivially. Consider a larger example with minimum out-degree seven.

	\vspace{2.8ex}
		\noindent\hrulefill
		\begin{example}
		
		In this section, we construct an example demonstrating how non-Seymour vertices can be embedded within a MCE environment of minimum out-degree $\delta^+(G) = 7$. Let $v_0$ be a vertex with minimum out-degree, and let $R_k = \{v \in V(G) \mid \text{dist}(v_0, v) = k\}$ denote the $k$-th out-neighborhood of $v_0$. We initialize the construction by setting $R_0 = \{v_0\}$. Since $\delta^+(G) = 7$, so $|R_1| = 7$. For $v_0$ to be a non-Seymour vertex, its second out-neighborhood must satisfy $|R_2| < |R_1|$, implying $|R_2| \le 6$.
		
		Let $G[R_1]$ be the subgraph induced by $R_1$. Suppose there exists a vertex $v_{1,1} \in R_1$ such that its out-degree within the induced subgraph is $d^+_{G[R_1]}(v_{1,1}) = 0$.  Then $v_{1,1}$ must possess at least 7 out-neighbors in $R_2$. This forces $|R_2| \ge 7$, which implies that the second out-degree of $v_0$ satisfies $|N^{++}(v_0)| \ge |R_1|$, making $v_0$ a Seymour vertex. Thus, every vertex $u \in R_1$ must satisfy $d^+_{G[R_1]}(u) \ge 1$.
		
		This degree constraint guarantees that $G[R_1]$ contains at least one directed cycle. This allows the nodes in $R_1$ to require fewer out-neighbors in $R_2$ to satisfy the minimum degree condition. Specifically, if every vertex in $R_1$ has $d^+_{G[R_1]}(u) \ge 2$, they collectively need at most 5 edges to distinct vertices in $R_2$, thereby maintaining $|R_2| \le 6$ and keeping $v_0$ non-Seymour. Because $G[R_1]$ contains 7 vertices, it can support at most $\binom{7}{2} = 21$ arcs, bounding the average induced out-degree by 3. This existence of internal cycles within $R_1$ is a necessary condition for $v_0$ to remain non-Seymour.
		
		We next examine the vertices in $R_2$, which similarly must satisfy $\delta^+(G) = 7$. We claim that every vertex in $R_2$ must have an internal out-degree $d^+_{G[R_2]}(u) \ge 2$. Suppose for contradiction that there exists a vertex $u_2 \in R_2$ with $d^+_{G[R_2]}(u_2) \le 1$. It follows that $u_2$ must have at least 6 out-neighbors in $R_3$.
		
		For vertices of sufficiently low degree, their neighborhood sizes double trivially under standard MCE minimality conditions \cite{KL}. 
		If a parent vertex $u_1 \in R_1$ of $u_2$ satisfies $|R_2| = |R_3|$, then the number of first forward out-neighbors of $u_1$ equals the number of its second forward out-neighbors. 
		This equality implies that $u_1$ is a Seymour vertex. Thus, we must have $d^+_{G[R_2]}(u) \ge 2$ for all $u \in R_2$.
		
		Furthermore, an induced subgraph $G[R_2]$ where every vertex has an out-degree strictly greater than 2 is impossible. It would require more arcs than is permitted by an oriented graph of its size. It follows that there exists at least one vertex in $R_2$ with an internal out-degree of exactly 2. This yields a neighborhood size of $|R_3| = 5$.
		
		To summarize, the cardinality of the rooted neighborhoods required to pack these non-Seymour vertices is given by:
		
		\begin{itemize}
			\item $|R_0| = |\{v_0\}| = 1$.
			\item $|R_1| = |\{v \in V(G) \mid \text{dist}(v_0, v) = 1\}| = 7$.
			\item $|R_2| = |\{v \in V(G) \mid \text{dist}(v_0, v) = 2\}| \le 6$.
			\item $|R_3| = |\{v \in V(G) \mid \text{dist}(v_0, v) = 3\}| \le 5$.
		\end{itemize}
		
		Finally, consider the vertices in $R_3$. To prevent their respective parent vertices from becoming Seymour vertices, the vertices in $R_3$ must maintain an internal induced out-degree of $d^+_{G[R_3]}(v) \ge 3$. This configuration requires at least $3 \times |R_3| = 15$ arcs within $G[R_3]$. However, a simple graph on 5 vertices can sustain at most $\binom{5}{2} = 10$ arcs. These nodes will be unable to meet their necessary out-degree requirements through local $R_3$ cycles. Thus either back arcs (to nodes with more second neighbors) are necessary, or an equal or larger $R_4$. Either of these conditions creates Seymour vertices. 
		
		By this contradiction, the vertices in $R_3$ are guaranteed to be Seymour vertices.
		\end{example}
		\noindent\hrulefill
	
	\vspace{2.8ex}
	
	As the number of second neighbors is iteratively reduced, nodes must compensate by connecting to more of their own siblings. This allows their parents to become non-Seymour nodes. The out-neighbors must form forward facing transitive triangles. These triangles must repeatedly interlock to form cycles to reduce the parent's second neighborhood. 
	
	\begin{remark}
	Note that the construction above outlines a skeleton required for such an MCE environment. To rigorously satisfy all conditions of an MCE, many more proofs about the necessity of arcs would need to be adopted. We omit those here to preserve the clarity of the preceding example.
	\end{remark}
	
	\begin{lemma}[Internal Degree Lower Bound]\label{idlb}. 
		Let $G$ be an MCE under a BFS-Lex coordinate system. 
		Suppose that each layer $R_i$ is packed with at least $i$ disjoint cycles. This forces every vertex $u_i \in R_i$ to satisfy:
		$$d^+_{\text{$R_i$}}(u_i) \ge i$$
	\end{lemma}	
	\begin{proof}
		Let $u_i \in R_i$ be the first node to connect to $i$ nodes locally in $R_i$ and Let $u_{i-1} \in R_{i-1}$ be the node of minimum degree in $R_{i-1}$. By assumption, we know that $u_{i-1}$'s local degree will be Seymour through the union of disjoint cycles. Since $u_i \in R_i$ sends at most $i - 1$ nodes to $R_i$, it will send at least $\delta - (i - 1)$ nodes to $R_{i+1}$. This implies that $u_{i-1}$ will connect to $\delta - (i - 1)$ nodes in $R_i$ and $\delta - (i - 1)$ nodes in $R_{i+1}$. 
		
		We know that $u_{i-1}$ has $R_{i-1}$ degree of at least $i-1$. This implies that $u_{i-1}$ sends at most $\delta - (i-1)$ forward arcs to $R_i$. We see now that there are at most $|\delta (i - 1)$ nodes in $R_i$ and at least $\delta - (i-1)$ nodes in $R_{i+1}$. This causes $u_{i-1}$'s forward Seymour property to become valid. Since both its local and forward Seymour properties are true, its overall Seymour property is true. 
	\end{proof}
	
	\begin{corollary} [Decreasing Neighborhood Property]\label{dsp}
		The packing of non-Seymour nodes into disjoint cycles must lead to strictly decreasing neighborhood sizes after the root layer. 
		Thus, $|G[\text{dist}(k+1)]| < |G[\text{dist}(k)]|$ for all $k > 0$. 
	\end{corollary}
	\begin{figure}
		\centering
		\begin{tikzpicture}[>=latex, line width=0.9pt]
			
			\node[fill=green!70!black,draw, circle, inner sep=2pt] (v0) at (0, -0.5) {\(v_0\)};
			
			\def\n{7} \def\radius{1.5} \def\centerx{3.5}
			\foreach \i in {1,...,\n} {
				\node[fill=cyan, draw, circle, inner sep=1.2pt] (v1\i) at ({\centerx + \radius*cos(360/\n*(\i-1))}, {\radius*sin(360/\n*(\i-1))}) {\small \(v_{1,\i}\)};
			}
			
			\def\m{6} \def\rradius{1.2}
			\foreach \i in {1,...,\m} {
				\node[fill=magenta!40,draw, circle, inner sep=1.2pt] (v2\i) at ({2*\centerx + \rradius*cos(360/\m*(\i-1))}, {\rradius*sin(360/\m*(\i-1))}) {\small \(v_{2,\i}\)};
			}
			
			\def\p{5} \def\rrradius{0.8}
			\foreach \i in {1,...,\p} {
				\node[fill=orange,draw, circle, inner sep=1.2pt] (v3\i) at ({3*\centerx + \rrradius*cos(360/\p*(\i-1))}, {\rrradius*sin(360/\p*(\i-1))}) {\small \(v_{2,\i}\)};
			}
			
			\foreach \i in {1,...,\n} {
				\draw[lightgray,->] (v0) -- (v1\i);
			}
			
			\foreach \i in {1,...,\n}{
				\foreach \j in {1,...,\m} {
					\draw[lightgray,->] (v1\i) -- (v2\j);
				}
			}
			
			\foreach \i in {1,...,\m}{
				\foreach \j in {1,...,\p} {
					\draw[lightgray,->] (v2\i) -- (v3\j);
				}
			}
			
			\foreach \i [evaluate=\i as \next using {int(mod(\i,\n)+1)}] in {1,...,\n} {
				\draw[->, bend right=15] (v1\i) to (v1\next);
			}
			
			\foreach \i [evaluate=\i as \next using {int(mod(\i,\m)+1)}, 
			evaluate=\i as \afternext using {int(mod(\i+1,\m)+1)}] in {1,...,\m} {
				\draw[->] (v2\i) to (v2\next);
				\draw[->] (v2\i) to (v2\afternext);
			}
			
			\foreach \i [evaluate=\i as \next using {int(mod(\i,\p)+1)}, 
			evaluate=\i as \afternext using {int(mod(\i+1,\p)+1)}, 
			evaluate=\i as \afterafternext using {int(mod(\i+1,\p)+1)}] in {1,...,\p} {
				\draw[->] (v3\i) to (v3\next);
				\draw[->] (v3\i) to (v3\afternext);
				\draw[->] (v3\i) to (v3\afterafternext);
			}
			
			\def\n{7} \def\radius{1.5} \def\centerx{3.5}
			\foreach \i in {1,...,\n} {
				\node[fill=cyan, draw, circle, inner sep=1.2pt] (v1\i) at ({\centerx + \radius*cos(360/\n*(\i-1))}, {\radius*sin(360/\n*(\i-1))}) {\small \(v_{1,\i}\)};
			}
			
			\def\m{6} \def\rradius{1.2}
			\foreach \i in {1,...,\m} {
				\node[fill=magenta!40,draw, circle, inner sep=1.2pt] (v2\i) at ({2*\centerx + \rradius*cos(360/\m*(\i-1))}, {\rradius*sin(360/\m*(\i-1))}) {\small \(v_{2,\i}\)};
			}
			
			\def\p{5} \def\rrradius{0.8}
			\foreach \i in {1,...,\p} {
				\node[fill=orange,draw, circle, inner sep=1.2pt] (v3\i) at ({3*\centerx + \rrradius*cos(360/\p*(\i-1))}, {\rrradius*sin(360/\p*(\i-1))}) {\small \(v_{3,\i}\)};
			}		
		\end{tikzpicture}
		\caption{\textit{BFS \& Lex Graph in an MCE. The minimum out-degree is 7. Nodes are arranged according to their BFS layers. Each node within these layers is assigned a distinguishing distance \& lexicographic ID.} } \label{earlyssnc}
	\end{figure}

	\section{Graph Level Order} \label{glover}
	\subsection{Introduction}
	
	Graph theory is the study of relationships. The Graph Level Order introduces set-theoretic ancestry to those connections. After identifying the limitations of existing terminology, we formally define the Graph Level Order architecture. This includes new set-theoretic types of neighbors, partitions of those neighbors, and theorems mapping these terms to existing terminology. This systematic reorganization provides the clarity necessary to finally resolve the SSNC.
	
	Standard neighborhood definitions suffer from an accounting failure: they cannot be partitioned without destroying ancestral data. When a node's influence is sent to its first neighbors, the connection to its second neighbors is reduced to a simple count. This strips distance-two neighbors of any knowledge of their specific parentage. This loss of context creates a disconnect and makes it impossible to track the precise distribution of out-degree influence.
	
	This counting problem is demonstrated in the following case:
	
	Let $u_i$ be a parent node, and let $v_{i+1}, w_{i+1} \in N^+(u_i)$ be distinct children of $u_i$. Suppose there exists a vertex $y_{i+2}$ such that $$ v_{i+1} \to y_{i+2} \in G \quad \text{and} \quad w_{i+1} \to y_{i+2} \in G, \text{ but } u_i \not\to y_{i+2} \in G. $$ Then the quadruple $(u_i, v_{i+1}, w_{i+1}, y_{i+2})$ forms a \textbf{Seymour diamond}. 
	
	\begin{figure}[ht] 
		\centering
		\begin{tikzpicture}[node distance=2cm] 
			\node (u_i) at (0, 0) {$u_i$};
			\node (v_{i+1}) at (2, 2) {$v_{i+1}$};
			\node (w_{i+1}) at (2, -2) {$w_{i+1}$};
			\node (y_{i+2}) at (4, 0) {$y_{i+2}$};
			\draw[->][thick] (u_i) -- (v_{i+1}); 
			\draw [->](u_i) -- (w_{i+1});
			\draw[->][thick] (v_{i+1}) -- (y_{i+2});
			\draw [->](w_{i+1}) -- (y_{i+2});
			
		\end{tikzpicture}  
		\caption{\textit{\textbf{A Seymour diamond}. A parent node $u_{i}$ with children $v_{i+1}$ and $w_{i+1}$, and a shared grandchild $y_{i+2}$ resulting in double counting in neighborhood analysis. The sum of the children's out-degrees is $|N^{+}(v_{i+1})|+|N^{+}(w_{i+1})|$. The actual contribution to the second out-neighborhood $|N^{++}(u_{i})|$ is only one, as both arcs terminate at the same node, $y_{i+2}$.} }
		\label{smdmpic}
	\end{figure}
	
	Seymour diamonds double-count second neighbors. 
	When two children at distance one, $v_{i+1}$ and $w_{i+1}$, both point to the same node $y_{i+2}$, a summation of their out-degrees fails. The operation $|N^+(v_{i+1})| + |N^+(w_{i+1})|$ incorrectly counts the same node twice. 
	
	The \textbf{rooted neighborhood} at distance $i \ge 0$ from $v_0$ is the subgraph induced by the vertices at directed distance $i$ from $v_0$. 
	
	A triple of vertices $x,y,u$ in an oriented graph $G$ forms a \textbf{transitive triangle} if $$ x \to y,\qquad x \to u,\qquad y \to u, $$ i.e., the three vertices induce an acyclic orientation of a 3-cycle.
	
	Let $u_i \in R_i$ be a parent node in the rooted layering, and let $v_{i+1} \in R_{i+1}$ be one of its children. The \textbf{interior neighbors} of $v_{i+1}$ with respect to $u_i$ are the vertices $$ \text{int}(u_i, v_{i+1}) = N^+(u_i) \cap N^+(v_{i+1}) \cap R_{i+1}. $$ The \textbf{interior degree} of $v_{i+1}$ with respect to $u_i$ is $$ |\text{int}(u_i, v_{i+1})|. \label{int_nbr} $$ 
	
	
	Let $u_i \in R_i$ be a parent node, and let $v_{i+1}, w_{i+1} \in R_{i+1}$ be children of $u_i$. Then $v_{i+1}$ and $w_{i+1}$ are called \textbf{siblings}. \label{siblings}
	
	Let $u_i \in R_i$ be a parent of $v_{i+1} \in R_{i+1}$. The \textbf{exterior neighbors} of $v_{i+1}$ with respect to $u_i$ are the vertices $$ \text{ext}(u_i,v_{i+1}) = N^{++}(u) \cap N^+(v_{i+1}) \cap R_{i+2}. $$
	The \textbf{exterior degree} of $v_{i+1}$ with respect to $u_i$ is $$ |\text{ext}(u_i, v_{i+1})|. $$\label{ext_nbr}
	
	Referring back to Figure \ref{earlyssnc}, $v_0$ is the MDN. The nodes at distance one, the first cycle, are the interior neighbors of $v_0$ with respect to another node in the cycle. They are all siblings because they are all in the same rooted neighborhood. The nodes at distance two from $v_0$, the second cycles, are the exterior neighbors of $v_0$ with respect to any particular interior neighbor. 
	
	\begin{remark} For the MDN $v_0$, all its out-neighbors are exterior neighbors because $R_0 = \{v_0\}$ and there are no earlier neighborhoods $R_{-1}$. Thus, $$ \text{int}(v_0, v) = \emptyset \hspace{1cm} \forall v \in\space R_1 $$ \end{remark}
	
	Let $v_0$ be the MDN, and let $u_i \in R_i$ be a vertex in $G$ with an earlier neighbor $v_j \in R_j$. The set of \textbf{back arcs} from $u_i$ to $v_j$ is $\text{back}(u_i, v_{i+1})$. The set of all back arcs of $u_i$ is denoted $\bigcup_{j < i} R_j \text{back}(u_i)$. \label{back_def}
	
	\begin{lemma}[Degree Partition]\label{nodepartition}
		Let $u_i \in R_i$ and let $v_{i+1} \in R_{i+1}$ with $u_i \to v_{i+1}$. Then the out-neighborhood of $ v_{i+1}$ admits the disjoint partition
		
		$$N^+(v_{i+1}) = \text{back}(u_i, v_{i+1}) \;\dot{\cup}\; \text{int}( u_i, v_{i+1}) \;\dot{\cup}\; \text{ext}( u_i, v_{i+1}).$$ \end{lemma}
	\begin{proof}
		Every out-neighbor of $ v_{i+1}$ lies in exactly one rooted neighborhood. 
		Arcs may point only to vertices in layers $R_j$ for $j < i$ , $R_{i}$, $R_{i+1}$, or $R_{i+2}$.
		Thus any $x \in N^+( v_{i+1})$ lies in one of these layers. 
		\begin{description}
			\item If $x \in R_j$ for $j < i+1$, then $x \in \text{back}(u_i, v_{i+1})$. 
			\item If $x \in R_{i+2}$, then $x \in \text{ext}( u_i, v_{i+1})$.   
			\item If $x \in R_{i+1}$, then $x \in \text{int}( u_i, v_{i+1})$. These sets are disjoint because distance is disjoint and cover all of $N^+( v_{i+1})$.
		\end{description}
	\end{proof}
	
	Lemma \ref{nodepartition} (Degree Partition) establishes a stronger linkage than first and second neighbors. By decomposing the out-degree into three mutually exclusive parts, the SSNC is transformed from a broad search into a targeted analysis. We can now analyze how each component—interior, exterior, and back—must simultaneously be non-Seymour for an MCE to exist. This decomposition provides the basis for our upcoming proofs.
	
	\begin{definition}\label{glover1} 
		Let $G = (V,E)$ be an oriented graph, and let $v_0 \in V$ be the vertex of minimum out-degree. A \textbf{Graph Level Order} on $G$ consists of the following structure: 
		\begin{enumerate} 
			\item Well Ordered Rooted Neighborhoods: Partition the vertices of $V$ into rooted neighborhoods $$R_i=\{v \in V : \text{dist}(v_0,v)=i\}, \text{ for } i=0,1,\dots,n.$$ 
			\item Well Ordered Vertices: For vertices $u_{i,k_1}, u_{i,k_2} \in R_i$ and $v_{j,m_1}, v_{j,m_2} \in R_j$ with $i<j$, we have $u_{i,k_1}, u_{i,k_2}<v_{j,m_1}, v_{j,m_2}$ and $u_{i,k_1} < u_{i,k_2}$. 
			
			\item Ancestry-Awareness: For each parent-child pair $u_i \to v_{i+1}$ with $u_i \in R_i$ and $v_{i+1} \in R_{i+1}$, the out-neighbors of $v_{i+1}$ are classified into the three disjoint sets according to Lemma \ref{nodepartition}: $\text{back}(u_i, v_{i+1})$, $\text{int}(u_i,v_{i+1})$, and $\text{ext}(u_i,v_{i+1})$. 
		\end{enumerate}
	\end{definition}
	
	\subsection{Using Graph Level Order}
	
	With the Graph Level Order vocabulary established, these definitions now must connect to the mathematical foundation. Interior neighbors relate to the first neighbors of a parent. Exterior neighbors relate to the second neighbors. In an MCE environment, Theorem \ref{lbl} (Load Balance) shows the only purpose of these interior neighbors is to be forward-facing transitive triangles that reduce the parent’s second neighborhood. 
	
	An edge $e$ is \textbf{unnecessary} to the MCE $G$ if replacing $e$ with an forward arc $e'$ in the MCE results in a graph that preserves the non-Seymour status of all vertices ($|N^{++}_{G'}(v)| < |N^{+}_{G'}(v)|$ for every $v \in V$).
	\begin{theorem}[Load-Balance]\label{lbl}
		Let $G \in \mathcal{O}$ be an MCE. Then the first out-neighborhood of every parent $u$ is a minimal set cover if the interior neighborhoods of its children: 
		$$N^+(u) = \bigcup_{v \in N^+(u)} \text{int}(u, v)$$
		This implies that $u$ is non-Seymour and we can reduce the second neighborhood size by 1. Further, extra interior arcs are unnecessary. 		
	\end{theorem}
	\begin{proof} 
		Let $x \in N^+(u)$. By assumption, there exists $v \in N^+(u)$ such that $v \to x \in G$. Then $x \in N^+(v)$, and hence $x \in \text{int}(u,v)$. Since $x$ was arbitrary, we have $$ N^+(u) \subseteq \bigcup_{v \in N^+(u)} \text{int}(u,v). $$ Conversely, for any $x \in \text{int}(u,v)$ with some $v \in N^+(u)$, we have $x \in N^+(u)$ by definition. Thus, $$ \bigcup_{v \in N^+(u)} \text{int}(u,v) \subseteq N^+(u). $$ Combining both inclusions, we conclude $$ N^+(u) = \bigcup_{v \in N^+(u)} \text{int}(u,v). $$ 
		
		Suppose the cover is not minimal. Then at least one node $v \in N^+(u)$ has out-degree at least 2. However, since only the out-degree of 1 by each node is needed to cover each node in $N^+(u)$, that extra arc by $v$ is unnecessary. 
		
		MCEs are supposed to be minimum density. We need to show that swapping to $e = v_i \to w_i$ a forward arc keeps the ancestor nodes, parent node, child and downstream nodes non-Seymour. The swap of $v_i \to w_i$ to $v_i \to w_{i+1}$ is safe for $u$ by the definition of an unnecessary arc. It does not hurt the parent node. It is also safe for the node $w_{i+1}$ as it is an incoming arc into $w_{i+1}$ and SSNC only cares about outgoing arcs. Also, in this dynamic state, the node $w_{i+1}$ has not been processed yet. Nodes in $N^+(v_k)$ have not been processed yet either because they are at a further distance $(k+1)$. Similarly, ancestor nodes at earlier distances $\text{dist}(v_0, v_k) < k$ are already proven non-Seymour and unaffected. 
		
		Finally, this swap $v_i \to w_i$ to $v_i \to w_{i+1}$ lowers the density of the graph because the first $i$ rooted neighborhoods have the same number of nodes, but will have one fewer arc. This shows that extra interior arcs cannot be a part of a minimal density MCE. 
		
		This forces every node in $N^+(u)$ to have out-degree of 1.  So number of arcs that each child node $v \in N^+(u)$ sends to $N^{++}(u)$ can be reduced by exactly 1. Meaning that the overall size of $N^{++}(u)$ can be reduced by exactly 1. 
		
		The result of this is that $|N^+(u)| - |N^{++}(u)| = 1$, while still keeping $u$ a non-Seymour node. 
		
	\end{proof}
	
	Theorem \ref{lbl} Load Balance restricts interior nodes to the cycle packing role. Every child $v \in N^+(u_i) \cap R_{i+1}$ directs an arc toward a sibling in $w \in R_{i+1}$. This distributes $u_i$'s load so no node's second neighborhood has to expand. The theorem reveals a dependency: parents rely on children to interlock. This preserves non-Seymour nodes.
	
	\begin{figure}
		\centering
		\begin{tikzpicture}[scale=0.9]
			\node[fill=green!70!black, draw, circle, inner sep=2pt] (v0) at (0, -0.5) {\(u\)};
			
			\def\n{7} 
			\def\radius{1.3}
			\def\centerx{3.5}
			
			\foreach \i in {1,...,\n} {
				\node[fill=cyan, draw, circle, inner sep=1.2pt] (v1\i) at ({\centerx + \radius*cos(360/\n*(\i-1))}, {\radius*sin(360/\n*(\i-1))}) {\small \(v_{i,\i}\)};
			}
			
			\foreach \start/\target in {4/5, 5/6, 6/7, 7/1, 1/2, 2/3, 3/4}
			{
				\draw [lightgray, ->, thick] (v0) -- (v1\start);
				\draw [lightgray, ->, thick] (v0) -- (v1\target);
				\draw [cyan, ->, thick] (v1\start) -- (v1\target);
			}
			
			Using red and dashed to signify "This is banned"
			
			\node[anchor=north] at (\centerx, -2) {
			};
		\end{tikzpicture}
		\caption{\textit{\textbf{Set cover implies a cycle}. Nodes have out-degree = 1 in $N^+(u)$. This implies that every node can have one less out-degree in $N^{++}(u)$. Thus, the size of $N^{++}(u)$ can be reduced by 1. }}
	\end{figure}
	
	\begin{lemma}[Next Link] \label{link}
		Let $u_i$ be a node in $G$ that is non-Seymour.	
		If a back arc $u_i \to v$ exists, then either $u_i$ is Seymour or there exists another back arc $u' \to v'$ such that $u' \in N^+(u_i)$ and $v' \in N^+(v)$ that leads to a Seymour vertex.
	\end{lemma}
	\begin{proof}
		Let $u_i \to v_k$ be a back arc, with $u_i \in R_i$ and $v_k \in R_k$ such that $k < i$.
		Since $u_i$ is non-Seymour, we have $|N^{++}(u_i)| < |N^+(u_i)|$.
		The back arc $u_i \to v_k$ creates paths of length two: $u_i \to v_k \to w$ for every $w \in N^+(v_k)$.
		Thus, $N^+(v_k) \subseteq N^{++}(u_i)$.
		Partition $N^+(v_k)$ into:
		\[
		I_1 = N^+(u_i) \cap N^+(v_k), \quad
		I_2 = N^+(v_k) \setminus I_1.
		\]
		
		Because $v_k \in R_k$ where $k < i$, the elements of $N^+(v_k)$ reside in neighborhoods preceding the reach of $N^{++}_{old}(u_i)$ relative to the distance from $v_0$. Thus, $I_2$ and $N^{++}_{old}(u_i)$ are completely disjoint.
		
		Case 1: Seymour Property.
		If $|I_1| < |N^+(u_i)| - |N^{++}_{old}(u_i)|$, then the second neighborhood of $u_i$ grows too large:
		$$|N^{++}(u_i)| = |N^{++}_{old}(u_i)| + |I_2| \ge |N^+(u_i)|,$$
		satisfying the Seymour property. 
		
		Case 2: Inductive Chain.
		Assume the back arc $u_i \to v_k$ does not immediately create a Seymour vertex.
		Then $|I_1| \ge |N^+(u_i)| - |N^{++}_{old}(u_i)|$. Suppose, for contradiction, that no $u' \in N^+(u_i)$ has a back arc to any $w \in N^+(v_k)$.	
		Then all $w \in N^+(v_k)$ are new second neighbors of $u_i$, disjoint from $N^{++}_{old}(u_i)$, implying
		$$|N^{++}(u_i)| = |N^{++}_{old}(u_i)| + |N^+(v_k)| \ge |N^+(u_i)|,$$
		which is Seymour.
		
		Thus, there must exist at least one $u' \in N^+(u_i)$ such that a back arc $u' \to w$ exists for some $w \in N^+(v_k)$.
		
		\begin{itemize}
			\item $u' \in N^+(u_i)$, so it is a child of $u_i$.
			\item $w \in N^+(v_k)$, so it is a neighbor of $v_k$.
		\end{itemize}
		
		This establishes the existence of a second back arc extending from an out-neighbor of $u_i$ to a neighbor of $v_k$, completing the next link in the chain.
		
		Hence, any back arc either directly leads to a Seymour vertex or leads to a chain of back arcs to a Seymour vertex.
	\end{proof}
	
	\begin{theorem}[Reduction]\label{red}
		Let $G \in \mathcal{O}$ be an MCE. For any $u \in R_i$, the second out-neighborhood is the maximal union of the exterior neighborhoods of its children: $$N^{++}(u) = \left( \bigcup_{v \in N^+(u)} \text{ext}(u,v) \right) \cup \left( \bigcup_{v \in N^+(u)} \text{back}(v) \right).$$
		This implies that back arcs are unnecessary.
	\end{theorem}
	\begin{proof} To establish the equality $N^{++}(u) = \left( \bigcup \text{ext}(u,v) \right) \cup \left( \bigcup \text{back}(u, v) \right)$, we show containment in both directions.
		
		($\subseteq$): Let $x \in N^{++}(u)$. By definition, there exists a path $u \to v \to x$. Since $u \in R_i$, its neighbor $v$ must be in $R_{i+1}$. By the properties of rooted neighborhoods, any neighbor $x$ of $v$ must lie in $R_j$ where $j \le (i+1) + 1 = i+2$. Because $x$ is a second neighbor of $u$, it cannot lie in $N^+(u) = R_{i+1}$, nor can it be $u$ itself. Thus, $x$ must either lie in $R_{i+2}$ or in some neighborhood $R_j$ with $j < i$. If $x \in R_{i+2}$, then $x \in \text{ext}(u,v)$. If $x \in R_j$ for $j < i$, then $x \in \text{back}(u, v)$. In either case, $x$ is contained in the union of the children's exterior and back neighborhoods.
		
		($\supseteq$): Let $x$ be a back node of some child $v \in N^+(u)$. There exists a path $u \to v \to x$. Since $x$ is in a neighborhood prior to $R_i$, it cannot be in $N^+(u)$ or be $u$ itself. Thus, $x \in N^{++}(u)$. Let $x$ be an exterior neighbor of some child $v \in N^+(u)$. There exists a path $u \to v \to x$ where $x \in R_{i+2}$. Since $R_{i+2}$ is disjoint from $R_{i+1}$ and $R_i$, $x$ is neither a neighbor nor the parent, meaning $x \in N^{++}(u)$. Combining these, we conclude that the second neighborhood is precisely the union of these two sets.
		
		Lemma \ref{link} (Next Link) says that the presence of a single back arc induces Seymour vertex or a path to a Seymour vertex. This makes it impossible to support back arcs in an MCE. As a result, back arcs in an MCE are unnecessary. 
		
		To show the elements of $N^+(u)$ provide a maximal set cover. Theorem 1 (Load Balance) implies every node in $u$'s out-neighborhood needs to form cycles. This restriction assures that every node in $N^+(u)$ sends the same number of arcs to $N^{++}(u)$, its full exterior capacity. No child can contribute more without violating non-Seymour properties. No additional sets exist beyond those indexed by $N^+(u)$. This coverage is maximal with respect to the available exterior capacity of children.
	\end{proof}
	
	Theorems \ref{lbl} (Load Balance), \ref{link} (Next Link) and \ref{red} Reduction transform the conjecture from a counting problem of degree sums into a set packing problem. A minimum counterexample environment tries to pack non-Seymour nodes into a minimally dense environment. An MCE tries to keep this process going as long as possible. Without back arcs to loop around, this is a strictly decreasing set of natural numbers that is bounded above. This is, by definition, a bounded set. 
	
	Note that the elimination of these back arcs does not imply that the graph becomes a directed graph containing a global sink. By Theorem \ref{lbl} (Load Balance), every node in the first neighborhood of the parent still belongs to a cycle. 
	
	\begin{theorem}[Graph Level Order Representation of Oriented Graphs]\label{gloverrep}
		Every oriented graph $G = (V,E)$ can be represented by a Graph Level Order. \end{theorem} 
	\begin{proof}
		We present an algorithm for this procedure. 
		\begin{enumerate}
			\item Select a minimum out-degree root
			\item Define rooted neighborhoods. For each $i \ge 1$, define $$ R_i = \{ u_i \in V : \text{dist}(v_0, u_i) = i \}, $$ 
			\item Total order within neighborhoods. Within each $R_i$, impose a consistent total order on vertices (e.g., lexicographic or another ordering). 
			\item Parent–child relations. For $u_i \in R_i$ and $v_{i+1} \in R_{i+1}$, if $u_i \to v_{i+1} \in E$, declare $u_i$ a parent and $v_{i+1}$ a child. This defines interior and exterior neighbors $u_i \to v_{i+1}$: $$ \text{int}(u_i,v_{i+1}) = N^+(u_i) \cap N^+(v_{i+1}) \cap R_{i+1}, \quad \text{ext}(u_i,v_{i+1}) = N^{++}(u_i) \cap N^+(v_{i+1}) \cap R_{i+2}. $$
		\end{enumerate}
		By construction, every out-neighbor of $v$ in $R_{i+1} \cup R_{i+2}$ is either interior or exterior relative to its parent $u$. 
		
		\medskip Thus, $G$ is fully described by the rooted neighborhoods $R_0, \dots, R_k$, the parent–child relations, and the interior/exterior neighbor sets. This satisfies all the properties required for a Graph Level Order representation.     
	\end{proof}
	
	Because of this universal representation of graphs, when we speak of oriented graphs we are always talking of a Graph Level Order environment. 
	
	These next two results follow directly from the definitions of regularity and unnecessary arcs. 
	
	\begin{corollary}[Regular Graph]\label{reggraph}
		Let $G \in \mathcal{O}$ be an MCE to the SSNC. All nodes in $G$ have out-degree $\delta$. 
	\end{corollary}
	
	\begin{corollary}[Regular Interior Degrees]\label{regint}
		Let $G \in \mathcal{O}$ be an MCE to the SSNC, and let $R_k$ be a rooted neighborhood at distance $k$. Then every node $v_k \in R_k$ will have an interior out-degree of $k$ in $R_k$. \end{corollary}
	
	The following corollaries help to establish the neighborhood density. They follow from the Load Balance Theorem \ref{lbl} applied first on consecutive neighborhoods, then inductively on the entire graph. 
	
	\begin{corollary}[Decreasing Exteriors] \label{del}
		Let $G \in \mathcal{O}$ be an MCE to the SSNC and 	
		let $u_i \in R_i$ and let $v_{i+1} \in N^+(u_i) \cap R_{i+1}$.
		Then for every $z \in \mathrm{ext}(u_i,v_{i+1})$,$$|\mathrm{ext}(v_{i+1},z)| < |\mathrm{ext}(u_i,v_{i+1})|.$$
	\end{corollary}
	
	\begin{corollary}[Impact on Rooted Neighborhood Size] \label{nbhsize}
		Let $G \in \mathcal{O}$ be be an MCE, with rooted neighborhoods $R_0, R_1, \dots, R_k$. Then for all $i \ge 0$,
		$$|\text{ext}(u_i, v_{i+1})| \le |N^+(v_0)| - i\text{ where }u_i \in R_i, v_{i+1} \in R_{i+1}$$
	\end{corollary}
	
	\begin{corollary}[Formula for Rooted Neighborhood Size] \label{nbhsizefmla}
		Let $G \in \mathcal{O}$ be an MCE. The rooted neighborhoods $R_i$ satisfy the bounds:
		$$|R_0| = 1, \quad |R_1| = \delta, \quad |R_i| \le \delta - (i-1) \text{ for } i \ge 2.$$
	\end{corollary}
	
	This last formula cements the structure of our neighborhood density. For a given rooted neighborhood at distance $k$ in an MCE, it gives precise size calculations. 
	
	\begin{figure}
		\centering
		\begin{tikzpicture}[x=0.8cm, y=0.4cm, scale=0.8] 
			\foreach \h [count=\i] in {10, 9, 8, 7, 6, 5, 4, 3, 2, 1} {
				\draw[fill=blue!15, draw=blue!50] (\i, 0) rectangle (\i+0.6, \h);
				
				\ifnum\i=1 \node[below] at (\i+0.3, 0) {$R_1$}; \fi
				\ifnum\i=4 \node[below] at (\i+0.3, 0) {$R_j$}; \fi
				\ifnum\i=9 \node[below] at (\i+0.3, 0) {$u_i$}; \fi
			}
		\end{tikzpicture}
		\caption{\textit{\textbf{Iterated decrease in second neighborhood}. In a rooted environment, this means that there is a maximum value that every decrease comes from: $|R_k| = \delta - (k-1)$}}
	\end{figure}
	
	\begin{figure}
		\centering
		\begin{tikzpicture}[
			every edge/.style = {draw,->},
			Vertex/.style = {circle, draw, minimum size=1cm, inner sep=0pt},
			]
			\node[Vertex] (u_k) at (0, 0) {\(u_k\)};
			\node[Vertex] (v_{k+1}) at (2,0) {\(v_{k+1}\)};
			\node[Vertex] (w_{k+1}) at (2,2) {\(w_{k+1}\)};
			\node[Vertex] (z_{k+2}) at (4,0) {\(z_{k+2}\)};
			
			\draw[->] (u_k) -- (v_{k+1});
			\draw[->] (v_{k+1}) -- (z_{k+2});
			\draw[->] (v_{k+1}) -- (w_{k+1});
			\draw[->, dashed] (z_{k+2}) -- (w_{k+1});
			
		\end{tikzpicture}
		\caption{\textit{\textbf{False Flag problem of back arcs}.  Assuming that $z_{k+2} \to w_{k+1}$ is the first back arc, $w_{k+1}$ must map to a new exterior neighbor. This will make $u_k$ a Seymour vertex.} }\label{backexwk}
	\end{figure}
	
	The rooted structure of the Graph Level Order is an important characteristic. Consider Example \ref{backexwk}. At first glance, the back arc $z_{k+2} \to w_{k+1}$ may seem to violate the principles of the Graph Level Order. However, this does not take into account their respective distances from the root node. Assuming this is the first back arc, what we can show is that this back arc cannot exist. 
	
	Let $u_k$ be at distance $k$. Then $v_{k+1}$ and $w_{k+1}$ are at distance $k+1$ and $z_{k+2}$ is at distance $k+2$. The nodes $v_{k+1}$ interior degree must be $k+1$, implying they have $\delta-k-2$ exterior neighbors. One of $v_{k+1}$'s exterior neighbors is $z_{k+2}$, but $w_{k+1}$ cannot relate to $z_{k+2}$ by definition of an oriented graph. This node $w_{k+1}$ must still have $\delta-k-2$ exterior neighbors. So $w_{k+1}$ must relate to a new node $y_{k+2}$. This means that $u_k$'s second neighborhood did not decrease by one as the Load Balance Theorem \ref{lbl} says it must by the cycles in its first neighborhood. Thus, $u_k$ will be is a Seymour vertex. 
	
	\subsection{Power of the Data Structure}\label{power}
	This would be a futile attempt at proving the SSNC if we could not manage the complications mentioned in previous papers. First are transitive triangles, where Brantner et al. \cite{BBKS} showed that oriented graphs without transitive triangles contain a Seymour vertex, the MDN.
	
	\begin{theorem}[Load-Balancing Triangles]\label{lbt}The only possible transitive triangles in an MCE are load-balancing triangles.\end{theorem}
	\begin{proof}
		We classify all transitive triangles by examining the first two arcs and determine whether a valid third arc can close the triangle.
		
		A triangle has three arcs, so there are $2^3 = 8$ possible combinations of arc types. We enumerate these cases in Table \ref{transtricases}:
		\begin{table}[h!] 
			\centering 
			\begin{tabular}{|c|c|c|c|} \hline Case
				& Labels & Triangle Possible? & Reason
				\\ \hline 
				1 & 3I & Yes & (all Intra-layer)\\ 
				\hline 
				2, 4 & 1B2I / 1F2I  & No & Symmetric Arcs \\ 
				\hline 
				3 & 2B1I & Yes & $x,y \in R_i$, $z \in R_{i+1},x \to y,z \to x,z \to y$\\ 
				\hline 
				5 & 1F1B1I & Yes &$x, y \in R_i, z\in R_{i+1}, x  \to y,x \to z,z \to y$. \\ 
				\hline 
				6 & 1F2B & Yes & $x \in R_i ,y \in R_{i+1},z \in R_{i+2},x \to y$,$z \to x$,$z \to y$. \\ 
				\hline 
				7 & 2F1I & Yes &$x\in R_i,y,z \in R_{i+1},x \to y, x \to z, y \to z$.  \\
				\hline 
				8 & 2F1B & No & \makecell{cannot form a closed transitive triangle.} \\  
				\hline 
			\end{tabular} \caption{Eight possible configurations of transitive triangles in a Graph Level Order.} 
			\label{transtricases} 
		\end{table}
		
		From these 8 possible cases, three are excluded immediately because of impossibility (Cases 2, 4, 8). Theorem \ref{red} (Reduction) says that back arcs are unnecessary in an MCE (Cases 3, 5, and 6). 
		
		The remaining triangles are the following: Case 1: $x \to y \to z \to x$, all in $R_i$, and the Load-Balancing Triangle (Case 7). A triangle from Case 1 is not a load-balancing triangle, so it contains an extra interior arc. By Theorem \ref{lbl} (Load Balance) $G$ cannot be an MCE.
	\end{proof}
	
	\begin{theorem}[Resolution of Seymour Diamonds]\label{smdiam}
		Let $G \in \mathcal{O}$ be a Graph Level Order containing a Seymour diamond. Then the ambiguity is resolved via exterior neighbors. \end{theorem}
	
	\begin{proof}
		Assume $G$ has a Seymour diamond $u_i, v_{i+1}, w_{i+1},y_{i+2}$. 
		By theorem \ref{red} (Reduction), 
		$$N^{++}(u_i) = \bigcup_{v_{i+1} \in {N^+(u_i)}} \text{ext}(u_i, v_{i+1}).$$
		
		The exterior neighbors are sets, so any node reached by multiple length-2 paths is counted only once in $N^{++}(u_i)$. 
		This eliminates double-counting for the node $u_i$. 
	\end{proof}
	
	The two main hindrances of this conjecture have been shown that they are not roadblocks. The data structure used throughout this paper has been defined. This data structure provides an understanding of interior and exterior arcs, and guides the search for a counterexample. It also provides understanding of non-Seymour nodes and guides the nodes in this proof process.
	
	\section{Global Impact of Packing} \label{packingNS}
	\subsection{Introduction}
	
	By definition, an MCE is always on the edge of collapse. Maximizing the number of non-Seymour nodes formalizes this fragility. Although it is invoked as a contradictory hypothesis, the argument is inherently constructive. The graph was set up based on distance. Set-theoretic, ancestral neighborhoods were constructed to resolve past obstructions. This is explicitly explaining why these things happened. 
	
	Let $G \in \mathcal{O}$ be an oriented graph represented by a Graph Level Order. Define $v_k \sim w_k$ for two vertices $v_k, w_k \in R_k$ if there exists a rooted automorphism of $G$ fixing $v_0$ and preserving rooted neighborhoods $R_i$ such that $v_k \mapsto w_k$
	
	\begin{theorem} [Rooted Neighborhood Quotient]\label{nbhquot} 
		Let $G$ be an MCE. 
		Every vertex within a rooted neighborhood $R_k$ is indistinguishable under the rooted automorphism $\sim$. 
		The resulting quotient graph $G/\sim$ is a well-defined hyper-node graph that preserves the out-degree and non-Seymour of $G$.    
	\end{theorem}
	\begin{proof}
		Let $v_k, w_k \in R_k$. By Theorem \ref{bfslex} these nodes have identical parents because they are at the same distance. Lemma \ref{nodepartition} (Degree Partition) says that $v_k$'s and $w_k$'s degree can be partitioned into interior and exterior degree. Corollary \ref{regint} (Regular Interior Degrees) says that both nodes have identical interior degrees of $k$. Similarly, since back arcs are excluded by Theorem \ref{red} (Reduction), they possess identical exterior neighbors. Furthermore, by Theorem \ref{lbl} (Load Balance), all interior arcs of $v_k$ and $w_k$ must induce forward-facing transitive triangles. As these are the unique load-balancing triangles permitted by Theorem \ref{lbt} (Load-Balancing Triangles), the rooted induced subgraphs $G[R(v_k)]$ and $G[R(w_k)]$ are isomorphic under a neighborhood-preserving automorphism fixing the root. Since this isomorphism holds for all pairs in $R_k$, the set $R_k$ constitutes an equivalence class under automorphism. Collapsing these classes yields a well-defined quotient graph $G / \sim$.
	\end{proof}
	
	The rooted neighborhood quotient structure can be seen in Figure \ref{earlyssnc}. As shown in Section \ref{glover}, every node in the graph has an out-degree of 7. By neighborhood, all nodes at distance one have interior out-degree of one. All nodes at distance two have an interior out-degree of two. Nodes having the same distance from the root, interior and exterior exterior neighbor set, making them automorphic. 
	
	Further, we see that each of these nodes in the first two neighborhoods and the root are also non-Seymour. This causes the neighborhoods to decrease in size from seven nodes to six to five as Corollary (Impact on Rooted Neighborhood Size) \ref{nbhsize} predicted. 
	
	The root serves as a driving force, influencing nodes and setting conditions for further analysis. In contrast, non-Seymour nodes impose a ceiling. This concept is like a bottleneck. These are  the forces that fold the graph into its linear hyper-node shape. 
	
	\subsection{Law of Conservation of Neighbors}
	
	Let $u_k \in R_k, v_{k+1}, w_{k+1} \in R_{k+1}$. Define $$T(u_k):=\{(v_{k+1},w_{k+1}) \in E | v_{k+1},w_{k+1} \in N^+(u_k),v_{k+1} \to w_{k+1}\}$$ denote the set of interior arcs among the children of $u_k$ in $R_{k+1}$. 
	
	\begin{theorem}[Law of Conservation of Neighbors] \label{connei}
		Let $G \in \mathcal{O}$ be an MCE to the SSNC with $u_k \in R_k$. 
		
		In a minimal counterexample to the SSNC, there is a direct trade-off between exterior and interior neighbors. 
		\begin{enumerate}
			\item For any parent node $u_k$ at level $k$: Each unit decrease in the exterior neighborhood (vertices in $R_{k+2}$) forces an increase in the interior degree (arcs within $R_{k+1}$).
			\item Because these interior arcs connect siblings, they create forward-facing transitive triangles rooted at $u_k$.
			\item The total number of these triangles is governed by:$$|T(u_k)| = |R_{k+1}| \cdot (k+1)$$
		\end{enumerate}
	\end{theorem}
	
	\begin{proof}
		Let $G \in \mathcal{O}$ be an MCE. The children of $u_k$, which belong to $R_{k+1}$ are a union of $R_{k+1}$ and each of their interior degrees is $k+1$. By theorem Load Balance \ref{lbl}, each child is in a cycle in $R_{k+1}$ that causes $u_k$'s second neighborhood to contract by 1. Each of these children forms a transitive triangle with $u_k$, forming $k+1$ transitive triangles. 
		Each cycle in $R_k$ is composed of forward facing transitive triangles between the parent node from $R_{k-1}$ and two children of $R_k$. This means that the number of transitive triangles is the degree of each node multiplied by the number of times nodes must appear in a cycle, $$|T(u_k)| = |R_{k+1}| \cdot (k+1).$$
		This can also be computed by individually counting cycles $C_1, C_2, ..., C_k, C_{k+1}$ in $R_{k+1}$. Each cycle has degree $|R_{k+1}|$ and we arrive at this same number when we sum it $k+1$ times. 
		
	\end{proof}
	
	The Law of Conservation of Neighbors \ref{connei} shows a sequence of rooted neighborhoods that are each defined by their nodes and the implied cycles on those nodes. This means that each rooted neighborhood has a density of $\frac{k \dot |R_k|}{|R_k| \dot (|R_k|-1)}$. Each node in $R_k$ has $k$ interior arcs and $\delta-k$ exterior arcs. Exterior arcs decrease as $k$ grows larger.
	
	Consider now another graph $G'$ with a back arc $(z_j \to w_k)$ from a later neighborhood $R_j$ to $R_k$. This is going from a node with less second neighbors to a node with more second neighbors. This is also going from a more dense environment to a less dense environment. The addition of this subgraph on the first $R_j$ neighborhoods increases the overall density of $G$ by one edge without changing the number of nodes. Thus, this new graph is more dense than the original graph and cannot be an MCE.  
		
	
	\begin{corollary}[Iterative Load-Balance]\label{ilb}
		Let $G=(V,E)$ be an MCE to the SSNC. Suppose that $u_k \in R_k$, and let
		
		$$ S:=N^+(u_k) \subseteq R_{k+1}. $$ 
		
		Then the subgraph induced by $S$, denoted $G[S]$, can be decomposed into edge-disjoint directed cycles by an iterative load-balancing process. 
	\end{corollary}
	
	
	\section{Main Theorem} \label{main}

	\begin{example}
		\begin{table}[h!]
			\centering
			\begin{tabular}{|c|c|c|}
				\hline
				\textbf{Rooted Neighborhood} & \textbf{Nodes} & \textbf{Size} \\
				\hline
				$R_0$ & 0 & 1 \\
				$R_1$ & 1, 2, 3 & 3 \\
				$R_2$ & 4, 5 & 2 \\
				\hline
			\end{tabular}
			\caption{\textit{A look at Example minimum out-degree 3 example through the lens of rooted neighborhoods and their sizes}} \label{exdegdoub2}
		\end{table}
	\end{example}
	
	We begin this section by contrasting two examples. Tables \ref{exdegdoub2} and \ref{exdegdoub12} give two different reasons for the emergence of a Seymour vertex. In Table \ref{exdegdoub2}, the presence of an oriented graph of size two implies that both those nodes cannot have an out-degree of 1. That would violate the oriented graph property. Hence, a node in $R_2$ needs to send at least 3 nodes to $R_3$ forcing a node in $R_1$ to be a Seymour vertex. 
	
	Conversely, Table \ref{exdegdoub12} because of the density of $R_3$. There are 5 nodes in $R_3$ at distance 3 from the root. Corollary \ref{regint} (Regular Interior Degrees) requires each node in $R_3$ to have an interior degree of $3$. This requires 15 arcs. This is impossible, as a 5-node oriented graph can have only 10 arcs by the binomial theorem. This will force a Seymour vertex. 
	
	\begin{example}
		\begin{table}[h!]
			\centering
			\begin{tabular}{|c|c|c|}
				\hline
				\textbf{Rooted Neighborhood} & \textbf{Nodes} & \textbf{Size} \\
				\hline
				$R_0$ & 0 & 1 \\
				$R_1$ & 1, 2, 3, 4, 5, 6, 7 & 7 \\
				$R_2$ & 8, 9, 10, 11, 12, 13 & 6 \\
				$R_3$ & 14, 15, 16, 17, 18 & 5 \\
				\hline
			\end{tabular}
			\caption{\textit{Minimum out-degree 7 example through the lens of rooted neighborhoods}} \label{exdegdoub12}
		\end{table}
	\end{example}

	\pgfplotsset{compat=1.18}
	\begin{figure}
		\centering
		\begin{tikzpicture}[scale=0.7]
			\begin{axis}[      
				width=11cm,
				height=7cm,
				xlabel={Minimum out-degree $\delta$},
				ylabel={Termination depth},
				xmin=2.5, xmax=14.5,
				ymin=0.5, ymax=6,
				xtick={3,4,5,6,7,8,9,10,11,12,13,14},
				ytick={1,2,3,4,5},
				grid=both,
				grid style={dashed, gray!30},
				legend style={at={(0.02,0.98)},anchor=north west}
				]
				
				
				\addplot[
				only marks,
				mark=*,
				mark size=2pt
				] coordinates {
					(3,2)
					(4,2)
					(5,2)
					(6,3)
					(7,3)
					(8,3)
					(9,4)
					(10,4)
					(11,4)
					(12,5)
					(13,5)
					(14,5)
				};
				
				\addlegendentry{Observed termination depth}
				
				
				\addplot[
				thick,
				blue,
				domain=3:14
				] {(x-1)/3 + 1};
				
				\addlegendentry{Linear trend $y = \delta/3 + 1$}\begin{tikzpicture}[
					every node/.style={draw, rectangle, rounded corners, minimum height=7mm},
					arrow/.style={->, thick}
					]
				\end{tikzpicture}
			\end{axis}
		\end{tikzpicture}
		\caption{\textit{The scatter points represent the last feasible rooted neighborhood before non-decreasing is forced. The linear trend illustrates the bound predicted by Corollary~\ref{smfmla}, showing termination at approximately $\lceil\delta/3\rceil$.}}
		
	\end{figure}\label{scattplt}
	
	Example \ref{scattplt} shows the feasibility of rooted neighborhoods with minimum out-degree values $\delta$ ranging from 3 to 14. A clear pattern is evident, as shown by the trend line. Regardless of $\delta$, a Seymour vertex is always found at $\lfloor \frac{\delta}{3} \rfloor+1$. This is discussed further in Theorem \ref{smfmla} (Last Rooted Neighborhood). 
	
	\begin{figure}
		\centering
		\begin{tikzpicture}[scale=0.7]
			\begin{axis}[
				width=12cm, height=9cm,
				axis lines=left,
				grid=both,
				xmin=0, xmax=30, 
				ymin=0, ymax=200, 
				xlabel={Size of Rooted Neighborhoods},
				ylabel={Number of Arcs},
				title={Distance to Seymour Vertex ($d \in [2, 27]$)},
				clip=false
				]
				
				\addplot[thick, black, domain=1:21, samples=100] {x*(x-1)/2} 
				node[pos=0.9, right] {$y = \binom{x}{2}$};
				
				\pgfplotsinvokeforeach{3,4,...,27}{
					\addplot[blue, thick, opacity=0.6, domain=0:{floor(#1/3)}, samples=9] 
					({ #1 - x }, { (#1 - x) * (x + 1) });
					\fill[red] (axis cs:{ #1 - floor(#1/3) }, { (#1 - floor(#1/3)) * (floor(#1/3) + 1) }) circle (1.5pt);
				}
				\node[blue, font=\small, rotate=-60] at (axis cs:24,120) {$y = -x^2 + (\delta+1)x$};
			\end{axis}
		\end{tikzpicture}
		\caption{\textit{\textbf{Emergence of a maximally dense, rooted neighborhood}. Blue state-transition paths show the contraction of rooted neighborhood size (supply) and quadratic growth in interior arc requirements (demand). These are bounded above by $y=(\binom{x}{2})$. Their intersection causes a terminal sink that corresponds to the last dense, rooted neighborhood with multiple Seymour vertices. Each of these blue lines is plotted along the quadratic arc of $y = -x^2 + (\delta+1) \cdot x$. }}
		\label{lastneightbl}  
	\end{figure}
	
	Figure \ref{lastneightbl} illustrates the fundamental breakdown that occurs when the model's supply and demand requirements clash within these rooted neighborhoods. Vertex capacity decreases linearly. The interior arc requirements grow quadratically. This is modeled by the equation $y = -x^2 + (\delta + 1)\cdot x$. 
	
	The concavity of these paths reflects the accelerating density required to maintain a decreasing sequence of neighborhoods. The $(\delta+1) \cdot x$ term is the ideal arc count. It is the Law of Conservation of Neighbors \ref{connei}. The $-x^2$ is the compression penalty issued by the non-Seymour nodes. The resulting equation $(\delta+1)x - x^2$ is the actual internal density. The variable $x$ is a double metric, defining both the distance from the root and the number of arcs required per node at that distance. 
	

	\begin{theorem}[Last Rooted Neighborhood] \label{smfmla}
		Let $G \in \mathcal{O})$ be an oriented graph with minimum out-degree $\delta \ge 3$. Let $\mathcal{R} = \{R_0, R_1, \dots, R_k\}$ be the rooted neighborhoods of $V$ originating at a root $v_0$ of minimum out-degree. 
		
		If every $u \in G$ is non-Seymour, then:
		\begin{enumerate}
			\item The maximum number of rooted neighborhoods is $k = \lceil \frac{\delta} {3} \rceil$.
			\item The cardinality of the terminal level set is $|R_k| = \delta - k + 1$.
			\item Every node in $R_{k-1}$ will belong to at least $k-1$ disjoint cycles. 
		\end{enumerate}
		
	\end{theorem}
	\begin{proof}
		Fix $i \ge 1$ and consider the rooted neighborhood $R_i$. The Regular Interior Degree Corollary \ref{regint} says that every node in $R_i$ must have an interior degree of exactly $i$. The Iterative Load-Balance Corollary \ref{ilb} converts each node's interior degrees into transitive triangles using the Load-Balance Theorem \ref{lbl}. This results in a requirement of the following-sized set of disjoint interior arcs: \[|R_i| \cdot i \tag{1} \] As an oriented graph itself, the number of arcs in $R_i$ is bounded above by the number of unordered pairs of vertices in $R_i$: \[|E(R_i)| \le \binom{|R_i|}{2}. \tag{2} \] Combining (1) and (2) yields the following feasibility condition: \[|R_i| \cdot i \le \binom{|R_i|}{2} \quad \Longleftrightarrow \quad 2i \le |R_i| - 1. \tag{3} \] Next, by Corollary \ref{nbhsizefmla} (Formula for Rooted Neighborhood Size), non-Seymour nodes enforce a global contraction of neighborhoods: \[|R_i| \le \delta - (i - 1).\tag{4}\] Substituting (4) into (3) gives \[2i \le \delta - i \quad \Longleftrightarrow \quad 3i \le \delta. \tag{5} \] Thus, for any rooted neighborhood $R_i$ to exist without forcing Seymour vertices, we have
		$$ i \le \frac{\delta} {3}. $$ When $\delta = 3i$, inequalities (3) and (4) both give tight bounds, yielding $$|R_i| = 2i + 1.$$ At this limit, the interior arc requirement exactly meets the capacity of $R_i$. If $i > \frac{\delta} {3}$, then inequality (5) fails. In this case, the required number of interior arcs exceeds what $R_i$ can support, forcing additional exterior arcs to be redirected inward. By the Law of Conservation of Neighbors \ref{connei}, this redirection produces additional transitive triangles, causing the Seymour vertex. Therefore, the rooted neighborhoods cannot extend beyond $R_{\lfloor \frac{\delta} {3} \rfloor + 1}$.  
	\end{proof}
	
	
	
	\begin{figure}
		\centering
		\begin{tikzpicture}[>=latex, line width=0.9pt, scale=0.75]
			\def\deltaVal{9}
			\def\intersectX{3} 
			\def\intersectY{7}
			
			\fill[blue!10!black!20] (0.5,2) -- plot[domain=0.5:\intersectX] (\x, {2*\x + 1}) 
			-- plot[domain=\intersectX:0.5] (\x, {\deltaVal - \x + 1}) -- cycle;
			
			\draw[->, thick] (0.5,2) -- (0.5,10) node[above] {\small $|R_i|$ (Nodes)};
			\draw[->, thick] (0.5,2) -- (8.0,2) node[right] {\small $i$ (Layer Distance)};
			
			\draw[blue, ultra thick, domain=0.5:4.2] plot (\x, {2*\x + 1}) 
			node[right, xshift=2pt] {\footnotesize $|R_i| \ge 2i+1$};
			
			\draw[red, ultra thick, domain=0.5:7] plot (\x, {\deltaVal - \x + 1}) 
			node[right] {\footnotesize $|R_i| \le \delta-i+1$};
			
			\draw[dashed, gray] (\intersectX, 2) -- (\intersectX, \intersectY);
			\filldraw[black] (\intersectX, \intersectY) circle (2pt);
			
			\node[below] at (\intersectX, 2) {$\frac{\delta}{3}$};
			\node[fill=white, inner sep=2pt] at (1.5, 4.5) {\small \textbf{FEASIBLE}};
			
			\draw[<-, gray] (4.5, 6) -- (5.5, 7) node[above, text width=2cm, align=center] 
			{\scriptsize \color{red!70!black} \textbf{Collapse Zone} \\ No valid configurations};
			
		\end{tikzpicture}
		\caption{\textit{\textbf{Intersection of Supply and Demand.} Supply is the number of nodes necessary for a MCE, bounded by the binomial theorem ($|R_i| \geq 2\cdot i + 1$). Demand is controlled by the decreasing size of the rooted neighborhoods. Beyond the threshold of $i = \delta/3$, there are no possibilities to support the interior cycles necessary, resulting in a collapse.
		}}
	\end{figure}
	
	Any directed graph $G \in \mathcal{O}$ in an MCE with more than three nodes will eventually have a last rooted neighborhood at distance $\lceil \delta / 3 \rceil$ by \ref{smfmla}. The size of this neighborhood will be $\lceil 2 \dot \delta / 3 \rceil$. This is an integer greater than 1 as long as $\delta > 1.5$, which must be true for there to be cycles to be present in $G$. 
	
	\begin{corollary}[Multiple Seymour Vertices] \label{multiDDNodes}
		Let $G \in \mathcal{O}$ be an oriented graph with minimum out-degree $\delta$, and let $R_i$ be the last, most dense rooted neighborhood. Then every node in $R_i$ is a Seymour vertex.
	\end{corollary}
	
	
	\subsection{Algorithm \& Complexity Theory}
	
	\begin{theorem}[Algorithm Complexity] \label{complx}
		Let $G$ be an oriented graph. The Graph Level Order algorithm runs in $$O(|V| + |E|),$$ where $|V|$ is the number of vertices and $|E|$ is the number of edges in $G$.
	\end{theorem}
	The proof follows the implementation of the BFS algorithm through the neighborhoods. Each node is visited only one time, giving it an $O(|V| + |E|)$ time.
	
	The algorithm can find a Seymour vertex in linear time. Moreover, it will work for any oriented graph. The analysis shows that it will not be a problem on larger graphs.
	
	\section{Conclusion}
	
	This work approached the SSNC through a minimum counterexample angle. Naturally, that led to layered embeddings of oriented graphs. By starting from a minimum out-degree node, the well ordering that was established gave the minimum counterexample its power. 
	
	The key insight is that BFS is not simply traversal algorithm. Along with lexicographic ordering, this algorithm can impose  a well-ordering on a graph. By connecting the neighborhoods to BFS layers, the structure of Seymour vertices became dependent on cycle packing. 
	
	
	This work also introduces techniques of partitioning and the dual metrics of degree and distance. These methods may apply beyond the SSNC, especially to problems in graph theory that invite constructive, algorithmic approaches rather than purely existential ones.
	
	
	Looking forward, this dual perspective invites further exploration of many still open problems. This suggests that other longstanding conjectures may also yield to methods that combine insights with algorithmic organization. 
	
	
	
	\section{Extensions}\label{furthework}
	\subsection{Alon-McDiarmid-Molloy}\label{further Work}
	
	The cycle packing nature of the Graph Level Order algorithm links the SSNC to two other well known conjectures, the Alon-McDiarmid-Molloy (AMM) Cycle Packing Conjecture, Caccetta-Häggkvist (CH). 
	
	\begin{conjecture}[AMM]
		If $G$ is a $k$-regular directed graph on $n$
		vertices with no parallel edges, it contains a collection of at least $\frac{k+1}{2}$
		edge-disjoint directed cycles. 
	\end{conjecture}
	
	\begin{corollary}[AMM via Cycle Packing]\label{ammthm}
		Let $G$ be an MCE with minimum degree $\delta$. The Graph Level Order decomposition forces the existence of $T_{\delta/3}$ edge-disjoint cycles. This matches the AMM requirement for all $\delta \ge 3$.
	\end{corollary}
	\begin{corollary}
		The Load-Balance and Reduction theorems establish that a minimum counterexample must be $\delta$-regular.
		At distance $i$ from the root, the interior out-degree of every node is exactly $i$ (Corollary \ref{regint}). As shown by the Iterative Load Balance Theorem \ref{ilb}, these Rooted Neighborhoods are a collection of disjoint cycles. 
		The total number of edge-disjoint cycles $C_{total}$ is the sum of the cycles at each level:
		$$C_{total} = \sum_{i=1}^{\delta/3} i = \frac{(\delta/3)(\delta/3 + 1)}{2} = T_{\delta/3}$$
		For $\delta=3$, $T_1 = 1 \ge \frac{3+1}{2} = 2$ (with the root-level cycle). For all $\delta > 3$, the triangular sum $T_{\delta/3}$ grows quadratically, whereas the AMM bound $\frac{\delta+1}{2}$ grows only linearly.
	\end{corollary}
	
	Corollary \ref{ammthm} connects the SSNC directly to the AMM $k$-regular cycle packing conjecture. Remember that Corollary \ref{regint} (Regular Interior Degrees) declares that at each distance $k$, the nodes each have out-degree $k$. 
	
	\subsection{Caccetta-Häggkvist}
			 \begin{figure}
			\centering
			\begin{tikzpicture}[>=latex, line width=0.9pt]
				\node[fill=green!70!black,draw, circle, inner sep=2pt] (v0) at (0, -0.5) {\(v_0\)};
				\def\n{9} \def\radius{1.5} \def\centerx{3.5}
				\foreach \i in {1,...,\n} {
					\node[fill=cyan, draw, circle, inner sep=1.2pt] (v1\i) at ({0.75*\centerx + \radius*cos(360/\n*(\i-1))}, {\radius*sin(360/\n*(\i-1))}) {\small \(v_{1,\i}\)};
				}
				\def\m{8} \def\rradius{1.2}
				\foreach \i in {1,...,\m} {
					\node[fill=magenta!40,draw, circle, inner sep=1.2pt] (v2\i) at ({1.75*\centerx + \rradius*cos(360/\m*(\i-1))}, {\rradius*sin(360/\m*(\i-1))}) {\(v_{2,\i}\)};
				}
				\def\p{7} \def\rrradius{0.8}
				\foreach \i in {1,...,\p} {
					\node[fill=orange,draw, circle, inner sep=1.2pt] (v3\i) at ({2.6*\centerx + \rrradius*cos(360/\p*(\i-1))}, {\rrradius*sin(360/\p*(\i-1))}) {\(v_{3,\i}\)};
				}
				\def\q{6} \def\rrradius{0.6}
				\foreach \i in {1,...,\q} {
					\node[fill=brown,draw, circle, inner sep=1.2pt] (v4\i) at ({3.3*\centerx + \rrradius*cos(360/\q*(\i-1))}, {\rrradius*sin(360/\q*(\i-1))}) {\(v_{4,\i}\)};
				}
				
				\foreach \i in {1,...,\n} {
					\draw[lightgray,->] (v0) -- (v1\i);
				}
				
				\foreach \i in {1,...,\n}{
					\foreach \j in {1,...,\m} {
						\draw[lightgray,->] (v1\i) -- (v2\j);
					}
				}
				
				\foreach \i in {1,...,\m}{
					\foreach \j in {1,...,\p} {
						\draw[lightgray,->] (v2\i) -- (v3\j);
					}
				}
				
				
				\foreach \i in {1,...,\p}{
					\foreach \j in {1,...,\q} {
						\draw[lightgray,->] (v3\i) -- (v4\j);
					}
				}
				
				\foreach \i [evaluate=\i as \next using {int(mod(\i,\n)+1)}] in {1,...,\n} {
					\draw[->, bend right=15] (v1\i) to (v1\next);
				}
				
				\foreach \i [
				evaluate=\i as \next using {int(mod(\i,\m)+1)}, 
				evaluate=\i as \anext using {int(mod(\i+1,\m)+1)}] in {1,...,\m} {
					\draw[->] (v2\i) to (v2\next);
					\draw[->] (v2\i) to (v2\anext);
				}
				
				\foreach \i [
				evaluate=\i as \next using {int(mod(\i,\p)+1)}, 
				evaluate=\i as \anext using {int(mod(\i+1,\p)+1)}, 
				evaluate=\i as \aanext using {int(mod(\i+2,\p)+1)}] in {1,...,\p} {
					\draw[->] (v3\i) to (v3\next);
					\draw[->] (v3\i) to (v3\anext);
					\draw[->] (v3\i) to (v3\aanext);	
				}
				
				\foreach \i [
				evaluate=\i as \next using {int(mod(\i,\q)+1)}, 
				evaluate=\i as \anext using {int(mod(\i+1,\q)+1)}, 
				evaluate=\i as \aanext using {int(mod(\i+2,\q)+1)}, 
				evaluate=\i as \aaanext using {int(mod(\i+3,\q)+1)}] in {1,...,\q} {
					\draw[->] (v4\i) to (v4\next);
					\draw[->] (v4\i) to (v4\anext);
					\draw[->] (v4\i) to (v4\aanext);
					\draw[->] (v4\i) to (v4\aaanext);	
				}
				
				\def\n{9} \def\radius{1.5} \def\centerx{3.5}
				\foreach \i in {1,...,\n} {
					\node[fill=cyan, draw, circle, inner sep=1.2pt] (v1\i) at ({0.75*\centerx + \radius*cos(360/\n*(\i-1))}, {\radius*sin(360/\n*(\i-1))}) {\small \(v_{1,\i}\)};				
				}
	
				\def\m{8} \def\rradius{1.2}
				\foreach \i in {1,...,\m} {
					\node[fill=magenta!40,draw, circle, inner sep=1.2pt] (v2\i) at ({1.75*\centerx + \rradius*cos(360/\m*(\i-1))}, {\rradius*sin(360/\m*(\i-1))}) {\(v_{2,\i}\)};	
				}
				\def\p{7} \def\rrradius{0.8}
				
				\foreach \i in {1,...,\p} {
					\node[fill=orange,draw, circle, inner sep=1.2pt] (v3\i) at ({2.6*\centerx + \rrradius*cos(360/\p*(\i-1))}, {\rrradius*sin(360/\p*(\i-1))}) {\(v_{3,\i}\)};	
				}
				
				\def\q{6} \def\rrradius{0.6}
				\foreach \i in {1,...,\q} {
					\node[fill=brown,draw, circle, inner sep=1.2pt] (v4\i) at ({3.3*\centerx + \rrradius*cos(360/\q*(\i-1))}, {\rrradius*sin(360/\q*(\i-1))}) {\(v_{4,\i}\)};

					
					\draw[thick, dotted, red] (v41) to [bend right](v0);
					\draw[thick, dotted, red] (v41) to [bend right](v11);
					\draw[thick, dotted, red] (v41) to [bend left](v16);
					\draw[thick, dotted, red] (v41) to [bend left](v18);
					\draw[thick, dotted, red] (v41) to [bend right](v22);
					\draw[thick, dotted, red] (v41) to [bend left](v28);
					\draw[thick, solid, red] (v28) to [bend right](v36);
					\draw[thick, solid, red] (v36) to [bend right](v41);
				}        
			\end{tikzpicture}
			\caption{\textit{\textbf{Visualization of Girth)}. Brooks's Equitable Coloring forces Seymour nodes to balance distribution of back arcs. These are spread across neighborhoods until they reach $k-2$ where a triangle is formed. For simplicity, dotted back back arcs only shown from $v_{4,1}$. Triangle is bold red $v_{4, 1} \to v_{2, 8} \to v_{3, 6} \to v_{4_1}$. } } \label{caccimage}
		\end{figure}

	\begin{conjecture}[CH]
		Every simple directed graph with $n$
		vertices and a minimum outdegree $\delta^+ \ge r$ contains a directed cycle of length at most $\lceil n/r \rceil$
	\end{conjecture}
	
	
	
	

	\begin{corollary}
		An oriented graph with minimum outdegree $\delta$. $\lceil n/r \rceil$. In particular, the existence of a Seymour vertex implies the existence of a triangle. 
	\end{corollary}
	\begin{proof}
		The Reduction Theorem \ref{red} prevents back arcs before the Seymour vertices. 
		The binomial theorem does not allow these Seymour vertices to have all their arcs through standard neighborhood decreasing. Instead, back arcs must come into place to compensate. 
		To ensure that all Seymour vertices meet their degree requirements, Brooks's equitable coloring principle must be applied. 
		This says that no Seymour vertex can send back arcs to more than $|R_k| / 3$ nodes in the neighborhood $R_i$ for $R_1$ through $R_{k-2}$. 
		
		This will exhaust the $\delta$ for every Seymour vertex. Once we reach the penultimate neighborhood of $R_{k-2}$, a triangle is reached. This proves that the girth of the graph cannot exceed 3. 	
	\end{proof}
	
	\section{Acknowledgments}
	I am deeply thankful to God for the inspiration and guidance that shaped this work, as Proverbs 3:5-6 has been a guiding light throughout this journey. I am profoundly grateful to my family—my wife and children, my mother, and father—for their unwavering support and encouragement.
	
	Special thanks to the late Dr. Nate Dean and Dr. Michael Ball for their contributions and support to this work. I want to sincerely thank everyone who has taken the time to pray for me, share encouraging words, and offer constructive feedback. Your support has meant so much to me, and I’m truly grateful for the kindness and thoughtfulness you’ve shown.
	
	I also appreciate the Department of Mathematics at Morehouse College for helpful discussions that were valuable during the research process. In particular, I thank Dr. Duane Cooper for asking the questions regarding understanding the conjecture, my approach and how cycles are important. 
	
	Finally, I extend heartfelt gratitude to my family and friends for their patience and belief in me throughout this journey.
	

\begin{thebibliography}{99}
		
		\bibitem{AMMref} N. Alon, C. McDiarmid, and M. Molloy, Edge-disjoint cycles in regular directed graphs, \emph{J. Combin. Theory Ser. B} \textbf{66} (1996), 273--278.
		
		\bibitem{BMN} Botler, F., Moura, P., and Naia, T. (2022). Seymour's Second Neighborhood Conjecture for orientations of (pseudo)random graphs. \textit{Discrete Mathematics}, 346.
		
		\bibitem{BBKS} Brantner, J., Brockman, G., Kay, B., and Snively, E. (2009). Contributions to Seymour's second neighborhood conjecture. \textit{Involve}, 2(4), 387--395.
		
		\bibitem{CHref} Caccetta, L; H\"{a}ggkvist, R. On minimal digraphs with given girth. Proceedings of the Ninth Southeastern Conference on Combinatorics, Graph Theory, and Computing (Florida Atlantic Univ., Boca Raton, Fla., 1978), pp. 181--187, Congress. Numer., XXI, Utilitas Math., Winnipeg, Man., 1978.
		
		\bibitem{CS} Chen, G., Shen, J., and Yuster, R. (2003). Second Neighborhood via First Neighborhood in Digraphs. \textit{Annals of Combinatorics}, 6, 15--20.
		
		\bibitem{Da} Daamouch, M. (2021). Seymour's second neighborhood conjecture for m-free, k-transitive, k-anti-transitive digraphs and some approaches. \textit{Discrete Appl. Math.}, 304, 332--341.
		
		\bibitem{daamouch2024secondneighborhoodconjecturetournaments} Daamouch, Moussa and Ghazal, Salman and Al-Mniny, Darine, About the second neighborhood conjecture for tournaments missing two stars or disjoint paths, arXiv:2406.03635, 2024.
		
		\bibitem{DL} Dean, N., and Latka, B. (1995). Squaring the tournament-an open problem. \textit{Proceedings of the 26th Southeastern International Conference on Combinatorics}, 73--80.
		
		\bibitem{Di} Díaz, A. E., Girão, A., Granet, B., and Kronenberg, G. (2024). Seymour's second neighbourhood conjecture: random graphs and reductions. \textit{arXiv preprint arXiv:2403.02842}.
		
		\bibitem{D} Diestel, Reinhard, Graph Theory, Springer, 2005.
		
		\bibitem{FidlerYuster} Fidler, D., and Yuster, R. (2007). Remarks on the second neighborhood problem. \textit{J. Graph Theory}, 55(3), 208--220.
		
		\bibitem{F} Fisher, D. C. (1996). Squaring a tournament: A proof of Dean's conjecture. \textit{J. Graph Theory}, 23(1), 43--48.
		
		\bibitem{Ghazal_2011} Ghazal, S., Seymour's second neighborhood conjecture for tournaments missing a generalized star, J. Graph Theory, 71, 89--94, 2012.
		
		\bibitem{GodsilRoyle2001} Godsil, Chris D. and Royle, Gordon F., Algebraic Graph Theory, Springer, 2001
		
		\bibitem{HKPS} Hassan, Z.R., Khan, I.F., Poshni, M.I., and Shabbir, M., Seymour's second neighborhood conjecture for 6-antitransitive digraphs, Discrete Applied Mathematics, 292, 59--63, 2021.
		
		\bibitem{HT} Havet, F., and Thomassé, S. (2000). Median orders of tournaments: A tool for the second neighborhood problem and Sumner's conjecture. \textit{J. Graph Theory}, 35, 244--256.
		
		\bibitem{HP} Huang, H., and Peng, F. (2024). An improved bound on Seymour's second neighborhood conjecture. \textit{arXiv preprint arXiv:2412.20234}.
		
		\bibitem{J} Jungnickel, D, Graphs, networks and algorithms, Springer-Verlag, 2005
		
		\bibitem{KL} Kaneko, Y., and Locke, S. (2001). The minimum degree approach for Paul Seymour's distance 2 conjecture. \textit{Congressus Numerantium}, 148, 201--206.
		
		\bibitem{Llado2013} Lladó, A. (2013). On the second neighborhood conjecture of Seymour for regular digraphs with almost optimal connectivity. \textit{European Journal of Combinatorics}, 34(8), 1265--1271.
		
		\bibitem{MninyGhazal} Mniny, D. A., and Ghazal, S. (2020). The Second Neighborhood Conjecture for oriented graphs missing $\{C_{4}, \overline{C_{4}}, S_{3}, \text{chair and co-chair}\}$-free graph. \textit{arXiv:2010.10790}.
		
		\bibitem{NALYVAIKO2021poster} Nalyvaiko, I. (2021). Properties of possible counterexamples to the Seymour's Second Neighborhood Conjecture. \textit{Poster presented at the EUCYS 2020/2021 Contest}.
	\end{thebibliography}

\end{document}